\newtheorem{theorem}{Theorem}[section]
\newtheorem{lemma}[theorem]{Lemma}
\newtheorem{proposition}[theorem]{Proposition}
\newtheorem*{theononum}{Theorem}
\theoremstyle{definition}
\numberwithin{equation}{section}
\newcommand{\de}{\colon}
\newcommand{\st}{\mid}
\newcommand{\C}{\mathbb{C}}
\newcommand{\Q}{\mathbb{Q}} 
\newcommand{\Ar}{\mathcal{A}}
\DeclareMathOperator{\rk}{rk}
\DeclareMathOperator{\codim}{codim}
\DeclareMathOperator{\sgn}{sgn}
\renewcommand{\phi}{\varphi}
\newcommand{\Lie}{\mathbb{L}}
\DeclareMathOperator{\id}{id}
\begin{document}
\title[Homotopy Lie algebra of geometric arrangements]{Homotopy Lie algebra of the complements of subspace arrangements with 
geometric lattices}

\author{Gery Debongnie}
\address{UCL, Departement de mathematique \\ Chemin du Cyclotron, 2 \\ 
B-1348 Louvain-la-neuve \\ Belgium}
\email{debongnie@math.ucl.ac.be}
\thanks{The author is an ``Aspirant'' of the ``Fonds National pour la 
Recherche Scientifique'' (FNRS), Belgium.}

\subjclass[2000]{Primary 55P62}

\begin{abstract}
Let  $\Ar$ be a geometric  arrangement such that  $\codim(x) \geq 2$ for
every $x \in\Ar$.  We prove that, if the complement space $M(\Ar)$ is  rationally hyperbolic, then there exists an injective
map $\Lie(u,v) \to \pi_\star(\Omega M(\Ar)) \otimes \Q$.
\end{abstract}

\maketitle

\section{Introduction}
Let $\Ar = \{x_1, \dotsc, x_n\}$ be a subspace arrangement in $\C^l$ 
with a geometric lattice and such that for every $x \in \Ar$, $\codim(x)
\geq 2$.  In~\cite{yu05}, S. Yuzvinsky and E. Feichtner described a
simple rational model for the topological space $M(\Ar) = \C^l \setminus \cup_{x \in \Ar} x$.  In~\cite{de07}, we give a complete description of subspace arrangements with
a geometric lattice and with the codimension condition such that the
space $M(\Ar)$ is rationally elliptic.  Briefly, we prove in~\cite{de07} that $M(\Ar)$ is rationally elliptic if and only if $M(\Ar)$ has the homotopy type of a product of odd-dimensional spheres.

Now, since we know the elliptic case quite well, the next step is to try to understand what happens when $M(\Ar)$ is rationally hyperbolic. It is a much more general situation, so, we can expect it to be much more complicated.  The main result of this paper is the following theorem~:
\begin{theononum}
Let $\Ar$ be a geometric arrangement such that for every $x \in \Ar$, we
have $\codim(x) \geq 2$. If $M(\Ar)$ is rationally hyperbolic, then there exists
an injective map $\Lie(u,v) \to \pi_\star(\Omega M(\Ar)) \otimes \Q$.
\end{theononum}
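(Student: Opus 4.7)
The plan is to find two odd-dimensional spheres $S^p, S^q$ and construct a rational retraction $M(\Ar) \to S^p \vee S^q$ at the level of Sullivan models, and then apply Hilton's theorem. By Hilton's theorem, $\pi_\star(\Omega(S^p \vee S^q)) \otimes \Q \cong \Lie(u,v)$ with $|u| = p-1$ and $|v| = q-1$; a retract induces an injection on rational homotopy Lie algebras, giving the desired embedding $\Lie(u,v) \hookrightarrow \pi_\star(\Omega M(\Ar)) \otimes \Q$.

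The first step is to locate the two generators. By the classification from \cite{de07}, $M(\Ar)$ is rationally elliptic if and only if $M(\Ar)$ has the homotopy type of a product of odd spheres, which translates into a specific combinatorial restriction on the geometric lattice. Under our rationally hyperbolic hypothesis this condition fails, so there must exist lattice elements $x, y$ whose mutual configuration obstructs the product structure. In the Yuzvinsky--Feichtner CDGA of \cite{yu05}, these furnish odd-degree cocycles $e_x, e_y$ of degrees $2\codim(x) - 1$ and $2\codim(y) - 1$, which will serve as the images of the fundamental generators of the Sullivan model of $S^p \vee S^q$.

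The second step is to build the retract at the CDGA level. Starting from the minimal Sullivan model of $S^p \vee S^q$ (whose generators are indexed by Hall basic brackets in $u, v$), I would define a morphism into the Y--F model by sending $u \mapsto e_x$, $v \mapsto e_y$, and extending inductively over the higher generators by choosing primitives for the requisite obstruction cocycles. A CDGA projection from the Y--F model back onto this image then yields the retraction, provided the projection respects the Y--F differential. The Sullivan correspondence translates this CDGA retraction into a rational retract $M(\Ar) \to S^p \vee S^q$, and passing to $\pi_\star(\Omega(-)) \otimes \Q$ completes the argument.

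The main obstacle is establishing the compatibility required in the second step. One must verify that the projection of the Y--F model respects the differential -- equivalently, that none of the Y--F relations arising from lattice incidences forces an iterated bracket of $e_x$ with $e_y$ to vanish in the cohomology of the image sub-CDGA. This reduces to a combinatorial analysis of the Y--F differential on the sub-lattice generated by $x$ and $y$, where the non-elliptic configuration identified in step one should provide exactly the flexibility needed to avoid any collapsing relations.
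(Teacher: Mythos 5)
Your overall strategy --- split off a wedge of two spheres rationally and use the fact that a retract injects on rational homotopy Lie algebras --- is indeed the skeleton of the paper's argument (this is exactly what Lemmas~\ref{lem:magic} and~\ref{lem:retlie} formalize, the latter being the wedge-of-two-spheres step). But there is a genuine gap in where you locate the two sphere classes. Two atom classes $[\{x\}]$, $[\{y\}]$ will in general \emph{not} work: for $H^\star(S^p\vee S^q)$ (which has trivial products) to be an algebra retract of $H^\star(M(\Ar))$ you need $[\{x\}]\cdot[\{y\}]=0$, whereas in a geometric arrangement $[\{x\}]\cdot[\{y\}]=\pm[\{x,y\}]$ is typically nonzero. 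In the case handled by Proposition~\ref{prop:a} (the only minimal-length relation in $\ker\phi$ is a monomial $e_{i_1}\dotsm e_{i_r}$, so the relevant retract $A_4$ is a truncated exterior algebra), \emph{no} pair of atoms has vanishing product, and no wedge of two atom-spheres splits off. The paper instead takes one generator to be an atom class and the other to be a secondary class ($u_1$ of degree $\sum_j\lvert e_{i_j}\rvert-1$ in Lemma~\ref{lem:a}, respectively the class $a_j$ with $da_j=[e_1,e_{i_2},\dotsc,e_{i_{r+1}}]$ in Lemma~\ref{lem:b}); such a class need not have odd degree, which is why Lemma~\ref{lem:retlie} must treat the even case separately. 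So your restriction to two odd spheres of degrees $2\codim(x)-1$, $2\codim(y)-1$ is not justified.

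Second, the step you label ``the main obstacle'' is not a verification to be checked at the end but the entire content of the proof, and you give no argument for it. The paper resolves it by a case analysis on $\ker\phi$ (does it contain a monomial of minimal word-length $r$ or not), by constructing explicit finite-dimensional algebras $A_4$ and $A_5$ together with explicit algebra retractions $A\to H^\star(D_\Ar,d)\to A$ --- which requires Lemma~\ref{lem:diff} (vanishing of the differential and the rank computation $\rk\vee\sigma=\lvert\sigma\rvert$ for $\lvert\sigma\rvert\le r$, using the geometric lattice hypothesis) and, in the second case, an exact identification of $\ker\rho$ --- and only then extracts $\Lie(u,v)$ from $L_{(A_4,0)}$ and $L_{(A_5,0)}$ via further quasi-isomorphisms (Lemmas~\ref{lem:algrel} and~\ref{lem:surj}). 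As written, your proposal defers all of this to an unexamined ``combinatorial analysis of the Y--F differential,'' so it does not constitute a proof.
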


The rational model of the space $M(\Ar)$ given by Yuzvinsky is described in 
section~\ref{sec:rmsa}. In section~\ref{sec:generalsit}, the general
situation is set up~: a map $\phi \de \Lambda (e_1, \dotsc, e_n) \to H^\star(M(\Ar), \Q); e_i \mapsto [\{x_i\}]$ is defined and studied. This map and its kernel will play an important role in the proof.  Finally, the last two sections contain the proof of the theorem. 

\section{Rational model of subspace arrangements} \label{sec:rmsa}
Let  $\Ar$ be a  central arrangement  of subspace  in $\C^l$.  It is known that, with the appropriate choice of the operations $\vee$ and $\wedge$, the set $L(\Ar)$ of non empty intersections of elements of $\Ar$ is a lattice with a rank function. Yuzvinsky
defined the relative atomic differential graded algebra $(D_\Ar,d)$
associated with  an arrangement as follows (see~\cite{yu05})~:  choose a
linear  order on $\Ar$.  The graded vector space $D_\Ar$ has a basis given by  all
subsets $\sigma \subseteq \Ar$.  For $\sigma = \{x_1, \dotsc, x_n\}$, we
define the differential by the formula \[
d\sigma = \sum_{j:\vee(\sigma \setminus \{x_j\}) = \vee \sigma} (-1)^j
(\sigma\setminus\{x_j\})
\]
where the indexing of the elements  in $\sigma$ follows the linear order
imposed on $\Ar$. With $\deg(\sigma) = 2 \codim \vee \sigma - |\sigma|$,
$(D_\Ar,d)$  is a  cochain complex.   Finally,  we need a multiplication  on
$(D_\Ar,d)$.  For $\sigma, \tau \subseteq \Ar$, 
\[
\sigma \cdot \tau = \left\{\begin{aligned} (-1)^{\sgn\epsilon(\sigma, 
\tau)} \sigma \cup \tau &\text{ if } \codim \vee \sigma + \codim \vee 
\tau = \codim \vee(\sigma \cup \tau) \\ 0 & \text{ otherwise} 
\end{aligned}\right.
\]
where  $\epsilon(\sigma, \tau)$ is the  permutation  that, applied  to $
\sigma \cup  \tau$ with the  induced linear  order, places  elements  of
$\tau$  after elements  of $\sigma$, both  in the induced  linear order.

A subset  $\sigma \subseteq \Ar$ is  said to be independant if $\rk(\vee
\sigma)  =  |\sigma|$.  When $\Ar$  is a a subspace  arrangement with  a
geometric lattice, then $H^\star(M(\Ar))$ is
generated by the classes $[\sigma]$, with $\sigma$ independant 
(see~\cite{yu05}). 

\section{General situation} \label{sec:generalsit}
Let  $\Ar = \{x_1,  \dotsc,  x_n\}$ be  a subspace  arrangement  with  a
geometric lattice such that every $x \in \Ar$ has $\codim(x) \geq 2$. We will suppose that no element $x_i$ is contained in another one, because otherwise, we can omit it when we consider $M(\Ar)$. We
 consider the morphism of graded algebras \[
\phi \de \Lambda (e_1, \dotsc, e_n) \to H^\star(M(\Ar), \Q); e_i \mapsto
 [\{x_i\}]. 
\]
As we will see,  in some sense,  the kernel of this map measure the non-ellipticity of the space $M(\Ar)$. The following proposition shows a clear connection between $\ker \phi$ and ellipticity.
\begin{proposition} \label{prop:injrat}
If the map $\phi$ is injective, then the space $M(\Ar)$ is rationally 
elliptic.
\end{proposition}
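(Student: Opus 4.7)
The plan is to bootstrap injectivity of $\phi$ into the much stronger statement that $\phi$ is an isomorphism; once $H^\star(M(\Ar),\Q)$ is identified with an exterior algebra on odd-degree generators, rational ellipticity is immediate from the intrinsic formality of such algebras.

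First I would record the needed cocycle facts. Since $\codim(x_i) \geq 2$, the bottom element $\vee\emptyset$ of $L(\Ar)$ differs from $x_i$, so the sum defining $d\{x_i\}$ is empty and each $\{x_i\}$ is a cocycle of odd degree $2\codim(x_i)-1$. For any independent subset $\sigma = \{x_{i_1},\dotsc,x_{i_k}\}\subseteq\Ar$, deleting any element strictly drops the rank of the join in the geometric lattice, so $\vee(\sigma\setminus\{x_j\})\neq\vee\sigma$ for each $j$ and hence $d\sigma=0$. Next I would establish surjectivity of $\phi$: iterating the product rule in $D_\Ar$, the product $\{x_{i_1}\}\cdots\{x_{i_k}\}$ equals $\pm\sigma$ when the codimensions of the $x_{i_j}$ add up to $\codim\vee\sigma$, and vanishes otherwise. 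If $\phi$ is injective, the nonzero monomial $e_{i_1}\cdots e_{i_k}\in\Lambda(e_1,\dotsc,e_n)$ cannot be sent to $0$, so the product in $D_\Ar$ must be nonzero, the codimensions must add, and $[\sigma]=\pm\phi(e_{i_1}\cdots e_{i_k})\in\im\phi$. Since classes of independent subsets generate $H^\star(M(\Ar),\Q)$ by Yuzvinsky's theorem, $\phi$ is surjective and therefore an isomorphism.

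It follows that $H^\star(M(\Ar),\Q)\cong\Lambda(e_1,\dotsc,e_n)$, an exterior algebra on finitely many odd-degree generators. Such algebras are intrinsically formal, so the minimal Sullivan model of $M(\Ar)$ is $(\Lambda(e_1,\dotsc,e_n),0)$ and $M(\Ar)$ has the rational homotopy type of the product $\prod_i S^{2\codim(x_i)-1}$ of odd-dimensional spheres, which is rationally elliptic; this is also consistent with the characterization of \cite{de07}. The main delicacy in the argument is the interplay between the matroid rank used to define ``independent'' and the codimension function that governs multiplication in $D_\Ar$: one has to leverage injectivity of $\phi$ to force codimension-additivity on every subset, so that the generators $[\sigma]$ of $H^\star$ coming from independent $\sigma$ really are captured by products of the singleton classes $[\{x_i\}]$.
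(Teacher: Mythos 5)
Your proposal is correct and follows essentially the same route as the paper: injectivity forces each product $[\{x_{i_1}\}]\cdots[\{x_{i_s}\}]$ to be nonzero, hence equal to $\pm[\{x_{i_1},\dotsc,x_{i_s}\}]$, so the classes of independent sets (which generate $H^\star(M(\Ar),\Q)$ by Yuzvinsky's result) all lie in the image, making $\phi$ an isomorphism onto an exterior algebra on odd generators. The only cosmetic difference is that you conclude ellipticity via intrinsic formality of such exterior algebras, whereas the paper cites Theorem~5.1 of~\cite{de07}; both are valid.
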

\begin{proof}
If this map is injective,  then,  for each sequence $1 \leq i_1 < i_2 < \dotso < i_s \leq n$, we have $\{x_{i_1} \} \cdot \{x_{i_2}\} \cdot \dotsc \cdot
\{x_{i_s}\}  \neq  0$ because  their  product  is non  zero  in  cohomology.
Therefore, for an appropriate choice of sign, we have the following equality  $\prod_{j=1}^s \{x_{i_j}\} = \pm
\{x_{i_1},  x_{i_2},  \dotsc,  x_{i_s}\}$ and $[\{x_{i_1},  \dotsc,  x_{i_s}\}] \neq  0$ (in
cohomology).  This implies that $\phi$  is surjective because,  for each
independant    set    $\{x_{i_1},   \dotsc,    x_{i_s}\}$    (which    generates
$H^\star(M(\Ar))$),  we have $[\{x_{i_1}, \dotsc, x_{i_s}\}] = \pm \prod_{i=j}^s
[\{x_{i_j}\}]$, which is in the image of $\phi$.  It means that $\phi$ is an
isomorphism.   Therefore,  $M(\Ar)$ has the rational  homotopy type of a
product  of  odd  dimensional spheres  and  theorem~5.1  of~\cite{de07}
implies that $M(\Ar)$ is rationally elliptic.  
\end{proof}

Now,  assume that the map $\phi$ is not injective.  In that case,
we can  define the natural  number $r = \max\{ s  \st \ker \phi  \subset
\Lambda^{\geq s} e_i \}$.  It is clear that $2 \leq r \leq n$. The bigger $r$ is, the smaller $\ker \phi$ is. Also, we understand quite well $\phi(\Lambda^{\leq r} e_i) \subset D_\Ar$~:
\begin{lemma} \label{lem:diff}
If $\sigma \in D_\Ar$ with $|\sigma| \leq r$, then 
$d\sigma = 0$ and $\rk \vee \sigma = |\sigma|$.
\end{lemma}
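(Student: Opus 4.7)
The plan is a case analysis on $|\sigma|$ versus $r$, reducing both halves of the conclusion to the nonvanishing of certain products of the singletons $\{x_i\}$ in $D_\Ar$.

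First I would record that each $\{x_i\}$ is a cocycle: the only possible term in $d\{x_i\}$ corresponds to $\sigma \setminus \{x_i\} = \emptyset$, and this term is discarded because $\vee \emptyset \neq x_i = \vee \{x_i\}$. Consequently, $\phi$ factors as a composite $\Lambda(e_1, \dotsc, e_n) \to D_\Ar \to H^\star(M(\Ar))$, where the first arrow $\tilde\phi$ sends $e_i \mapsto \{x_i\}$; this is well-defined on the exterior algebra because each $\{x_i\}$ has odd degree $2\codim(x_i) - 1$ and anticommutes with every $\{x_j\}$ by a direct check from the product rule.

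Next, for $s = |\sigma| < r$, with $\sigma = \{x_{i_1}, \dotsc, x_{i_s}\}$ indexed in the linear order, the wedge $\omega = e_{i_1} \wedge \dotsb \wedge e_{i_s}$ lies in $\Lambda^s$ with $s < r$, so it is not in $\ker \phi$ by the very definition of $r$. Hence $\tilde\phi(\omega) = \{x_{i_1}\} \cdots \{x_{i_s}\}$ is nonzero in cohomology, and a fortiori in $D_\Ar$. The multiplication rule then forces $\sum_j \codim(x_{i_j}) = \codim(\vee\sigma)$, and by the standard equivalence between codimension additivity and matroid independence for subspace arrangements with geometric lattice, this gives $\rk(\vee\sigma) = |\sigma|$. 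The same product equals $\pm \sigma$ in $D_\Ar$, exhibiting $\sigma$ as (up to sign) a product of cocycles and hence a cocycle.

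The crux is the boundary case $|\sigma| = r$, where $\phi|_{\Lambda^r}$ need not be injective, so the previous argument fails. I would argue by contradiction: if $\sigma$ is dependent, every proper subset is independent by the previous step, forcing $\sigma$ to be a matroid circuit. By the defining property of circuits one has $\vee(\sigma \setminus \{x_{i_j}\}) = \vee \sigma$ for every $j$, so every term of the differential survives:
\[
d\sigma = \sum_{j=1}^{r} (-1)^j (\sigma \setminus \{x_{i_j}\}).
\]
Each $\sigma \setminus \{x_{i_j}\}$ is an independent $(r-1)$-subset, hence equals $\pm \tilde\phi\bigl(\bigwedge_{k \neq j} e_{i_k}\bigr)$ in $D_\Ar$. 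The identity $[d\sigma] = 0$ in cohomology then rewrites as $\phi$ vanishing on a signed sum of distinct basis wedges of $\Lambda^{r-1}$; since no cancellation is possible, this produces a nonzero element of $\ker\phi \cap \Lambda^{r-1}$, contradicting the maximality of $r$. Therefore $\sigma$ is independent and $d\sigma = 0$ as in the previous case. The main obstacle, handled exactly this way, is the possible failure of injectivity of $\phi$ in degree $r$: one pushes it one step down via the circuit's differential to manufacture a contradiction in degree $r - 1$.
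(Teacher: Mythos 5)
Your proof is correct, but it reaches the conclusion --- in particular the rank statement --- by a genuinely different route from the paper. The paper first proves $d\sigma = 0$ for all $|\sigma| \leq r$ by exactly the kernel-pushdown argument you reserve for the boundary case (a nonzero $d\sigma$ would produce a nonzero element of $\ker\phi \cap \Lambda^{|\sigma|-1}$, contradicting the injectivity of $\phi$ on $\Lambda^{<r}$), and only afterwards deduces $\rk \vee \sigma = |\sigma|$ by a second induction resting on semimodularity of the geometric lattice: a covering relation $\vee\tau < \vee(\tau \cup \{x\})$ survives joining a further atom, and $d\sigma = 0$ excludes the degenerate equality case. You instead read off independence for $|\sigma| < r$ directly from the multiplication rule of $D_\Ar$ (a nonzero product of the $\{x_{i_j}\}$ forces codimension additivity, and a dependent set always violates codimension additivity because it contains a redundant atom of positive codimension), and you treat $|\sigma| = r$ by the circuit contradiction; this bypasses semimodularity entirely and is, if anything, cleaner. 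Two small caveats. First, you only use the implication ``codimension additive $\Rightarrow$ independent''; the converse is false in general, so calling it an equivalence is an overstatement (harmless here, since only the correct direction is invoked). Second, at $|\sigma| = r$ you conclude that $d\sigma = 0$ ``as in the previous case'', but the previous case obtained $d\sigma = 0$ by exhibiting $\sigma$ as a nonzero product of cocycles, which is unavailable at $s = r$: codimension additivity is not known there, and $e_{i_1} \dotsm e_{i_r}$ may well lie in $\ker\phi$ --- that is precisely the situation of Proposition~\ref{prop:a}. The one-line repair is immediate from the definition of the differential: once $\sigma$ is independent, every $\vee(\sigma \setminus \{x_{i_j}\})$ has rank $|\sigma| - 1 < \rk \vee \sigma$, hence differs from $\vee\sigma$, so the sum defining $d\sigma$ is empty.
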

\begin{proof}
We use induction on $s$ to prove that for $1 \leq s < r$ and for each sequence $1 \leq i_1 < i_2 < \dotso < i_s \leq n$, 
\begin{enumerate}
\item $d\{x_{i_1}, \dotsc, x_{i_s}\} = 0$,
\item $\phi(e_{i_1} \dotso e_{i_s}) = 
[\{x_{i_1}, \dotsc, x_{i_s} \}] \neq 0$.
\end{enumerate}
It is true for $s = 1$. Now suppose that it is true for $s-1$.  If 
$d\{x_{i_1}, \dotsc, x_{i_s}\} \neq 0$, then $d\{x_{i_1}, \dotsc, 
x_{i_s}\}$ is a non zero linear combination $ \sum \rho_j \{x_{j_1}, 
\dotsc, x_{j_{s-1}}\}$ and \[
0 = \left[ \sum \rho_j \{x_{j_1}, \dotsc, x_{j_{s-1}} \}\right] = \phi
\left( \sum \rho_j \,e_{j_1} \dotso e_{j_{s-1}}\right)
\]
which is impossible because $\phi$ restricted to $\Lambda^{<r}(e_1, \dotsc, e_n)$ is injective. This shows that 
$d\{x_{i_1}, \dotsc, x_{i_s}\} = 0$. 

The map $\phi$ is extended in a multiplicative way, 
therefore, by the induction hypothesis, we have~: \[
\phi(e_{i_1} \dotso e_{i_s}) = \phi(e_{i_1}) \phi(e_{i_2}\dotso e_{i_s})
 = [\{x_{i_1}\}][\{x_{i_2} \dotso x_{i_s}\}].
\]
But $s < r$, so $\phi(e_{i_1} \dotso e_{i_s}) \neq 0$ and we have $ \phi(e_{i_1} \dotso e_{i_s}) = [\{x_{i_1}, \dotsc, x_{i_s} \}]$. This proves the assertion (2). This proof by induction showed that $d\sigma = 0$ if $|\sigma| < r$. But the exact same reasoning can be done for $|\sigma| = r$. So, $d \sigma = 0$ if $|\sigma| \leq r$.

In order to prove that $\rk \vee \sigma = |\sigma|$, let's prove by induction that if $1 \leq s \leq r$, then for each sequence $1 \leq i_1 < i_2 < \dotsc < i_s \leq n$, $\rk \vee \{
x_{i_1}, \dotsc, x_{i_s} \} = s$.  It is obviously true for $s = 1$.  
Assume that it is true until $s-1 < r$.  By the induction hypothesis, $\vee 
\{x_{i_1}, \dotsc, x_{i_{s-2}}\} < \vee\{x_{i_1}, \dotsc, x_{i_{s-2}}, 
x_{i_s}\}$ is a maximal chain (if $s=2$, then $\vee\{x_{i_1}, \dotsc, 
x_{i_{s-2}}\} = \vee \emptyset = \C^l$).  But the lattice $L(\Ar)$ is 
geometric. So, \[
\vee \{x_{i_1}, \dotsc, x_{i_{s-2}}\} \vee x_{i_{s-1}} \leq \vee\{
x_{i_1}, \dotsc, x_{i_{s-2}}, x_{i_s}\} \vee x_{i_{s-1}}
\]
is also a maximal chain. The first part of this lemma shows that $d\{x_{i_1},
\dotsc, x_{i_{s}}\} = 0$,  which implies that \[
\vee \{x_{i_1}, \dotsc, x_{i_{s-1}}\} \neq \vee\{x_{i_1}, \dotsc, 
x_{i_{s-2}}, x_{i_{s-1}}, x_{i_s}\}.
\]
Hence, $\rk \vee\{x_{i_1}, \dotsc, x_{i_s}\} = \rk \vee\{
x_{i_1}, \dotsc, x_{i_{s-1}}\} + 1 = s$.
\end{proof}

To make the next sections easier to read, we will use the following notations. For a commutative 
differential graded algebra $(A,d)$, let's denote by $L_{(A,d)}$ the homotopy 
Lie algebra associated to its Sullivan minimal model. And for every $1 \leq 
i_1 < i_2 < \dotso < i_{r+1} \leq n$, let's denote by $[e_{i_1}, \dotsc, 
e_{i_{r+1}}]$ the element \[
\sum_{j=1}^{r+1} (-1)^j e_{i_1} \dotso \widehat{e_{i_j}} \dotso e_{i_{r+1}}.
\]

\section{Main result} \label{sec:main}

We will study the situation described in section~\ref{sec:generalsit} 
with $\ker \phi \neq 0$ (if $\ker \phi = 0$, proposition~\ref{prop:injrat} shows that we are in the elliptic case, which is studied in~\cite{de07}).  There are two slightly different cases that 
can arise : either $\ker \phi$ contains a $r$-uple $e_{i_1} \dotso 
e_{i_r}$ or $\ker \phi$ does not contain such a $r$-uple. The next two propositions shows the existence of an injective map $\Lie(u,v) \to \pi_\star \Omega M(\Ar) \otimes \Q$ in these two cases. Then, the main theorem~\ref{theo:main} is proved.

\begin{proposition} \label{prop:a}
If $\ker \phi$ contains a $r$-uple $e_{i_1}\dotso e_{i_r}$ with $1 \leq i_1 < \dotso < i_r \leq n$, then there 
exists an injective map \[
\Lie(u,v) \to \pi_\star \Omega M(\Ar) \otimes \Q.
\]
\end{proposition}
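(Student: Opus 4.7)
Set $\sigma_0 := \{x_{i_1}, \dotsc, x_{i_r}\}$. By Lemma~\ref{lem:diff}, $\sigma_0$ is a cocycle in $(D_\Ar,d)$, and the hypothesis $e_{i_1}\dotso e_{i_r} \in \ker\phi$ translates into $[\sigma_0]=0$ in $H^\star(M(\Ar);\Q)$, while every proper sub-product $[\{x_{j_1}\}]\dotso[\{x_{j_s}\}]$ with $s<r$ remains non-zero. Thus $\prod_{j=1}^r [\{x_{i_j}\}] = 0$ is the first vanishing product built out of the classes $[\{x_{i_j}\}]$.

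I would then pass to a Sullivan minimal model $(\Lambda V, d)$ of $(D_\Ar, d)$. Using the injectivity of $\phi$ on $\Lambda^{<r}(e_i)$, I pick odd-degree generators $v_1,\dotsc,v_r \in V$ dual to the classes $[\{x_{i_j}\}]$. The vanishing of their top product, together with the non-vanishing of every proper sub-product, forces the minimal-model construction to introduce a new generator $w\in V$ of degree $(\sum_j |v_j|)-1$ whose differential has $v_1\cdots v_r$ as its leading term (modulo terms of strictly larger word-length in $V$). Dualizing into the homotopy Lie algebra $L := L_{(D_\Ar,d)} = \pi_\star(\Omega M(\Ar))\otimes\Q$, the generators $v_1,\dotsc,v_r,w$ correspond to elements $u_1,\dotsc,u_r,\omega \in L$ (with the $u_j$ in even degree), and the leading term of $dw$ forces $\omega$ to be a non-vanishing iterated Whitehead--Samelson bracket of $u_1,\dotsc,u_r$. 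Setting $u := u_{i_1}$ and $v := \omega$, I would then aim to show that the Lie subalgebra of $L$ they generate is free on these two generators. To this end, I would compare with the fat wedge $T := T(S^{|v_1|},\dotsc,S^{|v_r|})$, whose cohomology is exactly $\Lambda(v_1,\dotsc,v_r)/(v_1\cdots v_r)$, whose rational homotopy Lie algebra is well-known to contain a free Lie subalgebra on two generators, and which (by its formality, together with the structure of Yuzvinsky's model for $M(\Ar)$) admits a Sullivan-model comparison with $(\Lambda V,d)$ through which such a free Lie subalgebra can be transported into $L$.

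The main obstacle is this final injectivity step. While the non-vanishing of $u$ and $\omega$ and of their first few brackets is immediate from the minimal-model construction, showing that \emph{every} iterated bracket word in $u,v$ yields a linearly independent class in $L$ requires careful bookkeeping through the whole minimal-model tower. I expect to need an inductive argument on bracket-length, in which at each stage the coboundaries of $(r{+}1)$-element subsets of $\Ar$ in $D_\Ar$ --- together with the notation $[e_{i_1},\dotsc,e_{i_{r+1}}]$ introduced at the end of Section~\ref{sec:generalsit} --- are used to certify that no unexpected relation in $L$ collapses the sub-algebra $\Lie(u,v)$.
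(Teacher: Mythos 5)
Your opening observations are correct and match the paper's setup: $\sigma_0$ is a cocycle whose class dies while every proper subproduct survives, and the algebra $\Lambda(e_{i_1},\dotsc,e_{i_r})/(e_{i_1}\dotso e_{i_r})$ (the cohomology of a fat wedge) is exactly the object the paper isolates as $A_4$. But there is a genuine gap at the decisive step, and you flag it yourself: you never prove that the bracket words in $u$ and $v$ remain linearly independent in $\pi_\star(\Omega M(\Ar))\otimes\Q$; you only announce that you ``expect to need an inductive argument on bracket-length.'' Chasing relations up the minimal-model tower of $(D_\Ar,d)$ is not a workable plan, since beyond the first new generator $w$ the differential of $(\Lambda V,d)$ is not under control, and an unstructured Sullivan-model comparison between $M(\Ar)$ and a fat wedge does not by itself induce an injection on homotopy Lie algebras.

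The idea that closes this gap in the paper is a double retraction. First, one writes down an algebra retraction $A_4 \stackrel{\rho}{\to} H^\star(D_\Ar,d) \stackrel{H^\star\psi}{\to} A_4$ with $(H^\star\psi)\circ\rho = \id$: the section $\rho$ exists precisely because $e_{i_1}\dotso e_{i_r}\in\ker\phi$, and the retraction $\psi$ is defined on $D_\Ar$ by killing every subset not contained in $\{x_{i_1},\dotsc,x_{i_r}\}$ (its multiplicativity uses $\ker\phi\cap\Lambda^{<r}(e_1,\dotsc,e_n)=0$, i.e.\ the minimality of $r$, together with Lemma~\ref{lem:diff}). Since $M(\Ar)$ is formal, Lemma~\ref{lem:magic} converts this cohomology retraction into a retraction of homotopy Lie algebras, so $L_{(A_4,0)}$ injects into $\pi_\star(\Omega M(\Ar))\otimes\Q$ with no bookkeeping at all. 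Second, one must still find $\Lie(u,v)$ inside $L_{(A_4,0)}$; this is Lemma~\ref{lem:a}, which passes to the quasi-isomorphic auxiliary algebra $A_1$ (via $A_2$, $A_3$ and a surjectivity argument on indecomposables, Lemmas~\ref{lem:algrel} and~\ref{lem:surj}) in which the two chosen classes --- a sphere class $e_{i_2}$ and the class $u_1$ corresponding to your $\omega$ --- have all pairwise products zero. That product-vanishing is exactly the hypothesis of Lemma~\ref{lem:retlie}, which retracts the situation onto the model of a wedge of two spheres and hence yields a split, in particular injective, map $\Lie(u,v)\to L_{(A_4,0)}$. Note that the required vanishing of products fails in $A_4$ itself, which is why the detour through $A_1$ is needed; your proposal, working only with $M(\Ar)$ and the fat wedge, has no mechanism playing this role.
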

\begin{proof}
We define $(A_4,0) = \left(\frac{\Lambda(e_{i_1}, \dotsc, e_{i_r})}{e_{i_1} 
\dotso e_{i_r}}, 0\right)$ and we construct the map $\psi \de (D_\Ar, d) \to 
(A_4, 0)$  in the following way~: if $\{k_1, \dotsc, k_t\} \subseteq 
\{i_1, \dotsc, i_r\}$ and $k_1 < \dotso < k_t$, then $\psi(\{k_1, 
\dotsc, k_t\}) = [e_{k_1}\dotso e_{k_t}]$. Otherwise, $\psi(\{k_1, 
\dotsc, k_t\}) = 0$. Since $\ker \phi \cap \Lambda^{<r} (e_1, \dotsc, 
e_n) = 0$, a simple check shows that $\psi$ is multiplicative. 
Lemma~\ref{lem:diff} shows that $\psi(d\sigma) = \psi(0) = 0 = 
d\psi(\sigma)$.  Hence, $\psi$ is a morphism of differential graded 
algebras.

Since $e_{i_1}\dotso e_{i_r} \in \ker \phi$, we can define another map 
$ \rho \de (A_4,d) \to H^\star((D_\Ar, d), \Q)$ by letting $\rho(
[e_{i_s}]) = [\{x_{i_s}\}]$. This is a morphism of graded algebras. Now, we 
have the following maps~: \[
A_4 \stackrel{\rho}{\to} H^\star((D_\Ar, d), \Q) \stackrel{H^\star\psi}
{\longrightarrow} A_4.
\]
Those maps verify the following property~: $(H^\star\psi) \circ \rho = 
\id$, which means that $H^\star \psi$ is a retraction of $\rho$. Since 
$M(\Ar)$ is a formal space (proved in~\cite{yu05}), the 
lemma~\ref{lem:magic} implies then the existence of an injective map $h 
\de L_{(A_4,0)} \to \pi_\star \Omega M(\Ar) \otimes \Q$.  By 
lemma~\ref{lem:a}, there is an injective map $\Lie(u,v) \to 
L_{(A_4,0)}$.  The composition of these two maps gives us the needed 
application.
\end{proof}

\begin{proposition} \label{prop:b}
If $\ker\phi$ does not contain a $r$-uple $e_{i_1}\dotso e_{i_r}$, then 
there exists an injective map \[
\Lie(u,v) \to \pi_\star \Omega M(\Ar) \otimes \Q.
\]
\end{proposition}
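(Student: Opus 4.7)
The plan is to follow the same scheme as Proposition~\ref{prop:a}, with the first difficulty being to locate a suitably rigid relation inside $\ker\phi$ to play the role that the single monomial $e_{i_1}\dotso e_{i_r}$ played there. By minimality of $r$, the kernel $\ker\phi$ has a nonzero $r$-th exterior component, and under the present hypothesis that component never reduces to a single monomial. The notation $[e_{i_1},\dotsc,e_{i_{r+1}}]$ introduced at the end of Section~\ref{sec:generalsit} indicates the natural target: such a bracket is precisely a signed sum of all the $r$-subsets of an $(r+1)$-element index set. My first step would therefore be to produce, starting from any witnessing relation in $\ker\phi$, indices $1\leq i_1 < \dotso < i_{r+1} \leq n$ such that $[e_{i_1},\dotsc,e_{i_{r+1}}] \in \ker\phi$, using Lemma~\ref{lem:diff} to control the differential on $r$-subsets and the multiplicativity of $\phi$ to trim the support of the relation down to an $(r+1)$-set.

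Granted such a bracket, I would set $(A_B,0) = \Lambda(e_{i_1},\dotsc,e_{i_{r+1}}) / ([e_{i_1},\dotsc,e_{i_{r+1}}])$ and mimic the construction of Proposition~\ref{prop:a}: define $\psi\de (D_\Ar,d) \to (A_B,0)$ by $\psi(\{k_1,\dotsc,k_t\}) = [e_{k_1}\dotso e_{k_t}]$ when $\{k_1,\dotsc,k_t\}\subseteq\{i_1,\dotsc,i_{r+1}\}$ and $\psi(\sigma)=0$ otherwise, and $\rho\de (A_B,0)\to H^\star((D_\Ar,d),\Q)$ by $\rho([e_i]) = [\{x_i\}]$. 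Multiplicativity of $\psi$ follows from injectivity of $\phi$ on $\Lambda^{<r}$, compatibility with $d$ from Lemma~\ref{lem:diff}, and well-definedness of $\rho$ from the fact that the bracket lies in $\ker\phi$. The identity $H^\star(\psi)\circ\rho = \id$ is then immediate on generators, so formality of $M(\Ar)$ together with Lemma~\ref{lem:magic} produces an injection $L_{(A_B,0)} \hookrightarrow \pi_\star(\Omega M(\Ar))\otimes\Q$.

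The remaining step is to embed $\Lie(u,v)$ into $L_{(A_B,0)}$ via a dedicated lemma analogous to Lemma~\ref{lem:a}. I expect this to be the main obstacle. In Proposition~\ref{prop:a} the top-monomial relation $e_{i_1}\dotso e_{i_r}$ makes the minimal Sullivan model of $(A_4,0)$ essentially transparent, whereas the bracket relation here is a sum of $r+1$ distinct $r$-monomials, so killing it in the minimal model requires introducing several new generators whose mutual brackets in the homotopy Lie algebra must be controlled simultaneously. The free generators $u,v$ of $\Lie(u,v)$ should arise as two of these dual generators, and the technical work will be a filtered induction on word length in the minimal model showing that no higher-order relations force iterated Lie brackets of $u$ and $v$ to collapse.
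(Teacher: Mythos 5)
Your first step (extracting indices $1\leq i_1<\dotso<i_{r+1}\leq n$ with $d\{x_{i_1},\dotsc,x_{i_{r+1}}\}\neq 0$, hence a bracket relation in the kernel) matches the paper, but the middle step has a genuine gap: the small algebra $A_B=\Lambda(e_{i_1},\dotsc,e_{i_{r+1}})/([e_{i_1},\dotsc,e_{i_{r+1}}])$ does not in general admit the retraction you need. Put $X=x_{i_1}\vee\dotso\vee x_{i_{r+1}}$; since $d\{x_{i_1},\dotsc,x_{i_{r+1}}\}\neq 0$, Lemma~\ref{lem:diff} forces $\rk X=r$, and then every $r$-element subset of atoms below $X$ has join exactly $X$. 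Now suppose there is an atom $y<X$ with $y\notin\{x_{i_1},\dotsc,x_{i_{r+1}}\}$ (nothing rules this out). In $d\{x_{i_1},\dotsc,x_{i_r},y\}$ every element may be deleted without changing the join, so this boundary contains the term $\pm\{x_{i_1},\dotsc,x_{i_r}\}$ plus terms supported on sets containing $y$. Hence $\psi(d\{x_{i_1},\dotsc,x_{i_r},y\})=\pm e_{i_1}\dotso e_{i_r}\neq 0=d\psi(\{x_{i_1},\dotsc,x_{i_r},y\})$: your $\psi$ is not a chain map, $H^\star\psi$ does not exist, and the hypothesis $g\circ f=\id$ of Lemma~\ref{lem:magic} cannot be checked. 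The problem is not just with this particular $\psi$: the same boundary shows that $[\{x_{i_1},\dotsc,x_{i_r}\}]$ equals, in $H^\star(D_\Ar,d)$, a combination of classes supported on sets containing $y$, which obstructs any multiplicative retraction onto $A_B$ sending $[\{y\}]$ to $0$. This is precisely why the paper enlarges the index set to $B=\{x\in\Ar\st x<X\}=\{x_{j_1},\dotsc,x_{j_m}\}$, truncates above word length $r$, and divides by the full kernel of $\rho$ (spanned by all brackets with indices in $B$) to form $A_5$: for $y\notin B$ the term $\{x_{i_1},\dotsc,x_{i_r}\}$ does \emph{not} occur in $d\{x_{i_1},\dotsc,x_{i_r},y\}$, and that is exactly what makes the retraction well defined.

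A secondary gap is that your last step, the embedding $\Lie(u,v)\to L_{(A_B,0)}$, is only a program. The paper's tool here is Lemma~\ref{lem:retlie}, which needs two cocycles $x,y$ in the minimal model whose products $x^2$, $y^2$, $xy$ all map to zero in the target; for your $A_B$ the generators $e_{i_j}$ have nonvanishing pairwise products as soon as $r\geq 3$, so the required pair cannot be found among them and must be manufactured. Lemma~\ref{lem:b} does this for $A_5$ concretely: it computes a basis of $A_5$ in word length $r$, replaces $A_5$ by a quasi-isomorphic algebra with new generators $a_i$ killing the brackets and satisfying $a_ia_j=a_ie_j=0$, and then feeds one $e_j$ and one $a_i$ into Lemma~\ref{lem:retlie}. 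Your proposed ``filtered induction on word length'' would in effect have to reproduce this computation, and as stated it gives no candidate for the second cocycle.
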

\begin{proof}
Since $\ker \phi \cap \Lambda^r(e_1, \dotsc, e_n) \neq \emptyset$, there
exists a non zero linear combination $\sum \lambda_{i_1\dotso i_r} e_{i_1}\dotso e_{i_r}$ such 
that $\phi(\sum \lambda_{i_1 \dotso i_r} e_{i_1} \dotso e_{i_r}) = 0$. 
So, $[\sum \lambda_{i_1 \dotso i_r} \{x_{i_1}, \dotsc, x_{i_r}\}] = 0$ 
in $H^\star(D_\Ar, d)$ and there exists a $\sigma \in D_\Ar$ such that 
$d\sigma = \sum \lambda_{i_1 \dotso i_r} \{x_{i_1}, \dotsc, x_{i_r}\} 
\neq 0$. From this, we deduce that there exists $1 \leq i_1 < \dotsc < i_{r+1} \leq n$ 
such that $d\{x_{i_1}, \dotsc, x_{i_{r+1}}\} \neq 0$. 

Let $X = x_{i_1} \vee x_{i_2} \vee \dotso \vee x_{i_{r+1}}$ and $B = \{x
 \in \Ar \st x < X \} = \{x_{j_1}, \dotsc, x_{j_m} \}$.  Using
lemma~\ref{lem:diff} and the fact that $d\{x_{i_1}, \dotsc, x_{i_{r+1}}\} \neq 0$, we observe that $\rk X = r$.  Also, lemma~\ref{lem:diff} shows that for any subset $\sigma \subset B$ with $r$ 
elements, $\rk \vee \sigma = r = \rk X$, so $\vee \sigma = X$.  It  implies that any $r+1$ product 
$\prod_{i=1}^{r+1} \{x_{k_i}\} =0$ for $x_{k_i}$ in $B$.  It allows us to define the following map~: \[
\rho \de \frac{\Lambda(e_{j_1}, \dotsc, e_{j_m})}{\Lambda^{\geq r+1} (
e_{j_1}, \dotsc, e_{j_m})} \to H^\star(D_\Ar, d); e_j \mapsto [\{x_j\}].
\]
Let's prove that $\ker \rho \subset \Lambda^r(e_{j_1}, \dotsc, e_{j_m})$
is generated by the $[e_{i_1}, \dotsc, e_{i_{r+1}}]$ with $\{i_1, 
\dotsc, i_{r+1}\} \subseteq \{j_1, \dotsc, j_m\}$~:
\begin{itemize}
\item It is clear that $\rho[e_{i_1}, \dotsc, e_{i_{r+1}}] = d\{x_{i_1},
\dotsc, x_{i_{r+1}}\}$ (because $\rk X = r$, $\ker \phi$ does not contain any $r$-uple and, by 
lemma~\ref{lem:diff}, $\rk\vee\{x_{i_1}, \dotsc, \widehat{x}_{i_j}, \dotsc, 
x_{i_{r+1}}\} = r$).
\item If $\{i_1, \dotsc, i_r\} \subseteq \{j_1, \dotsc, j_m\}$ and $y 
\in \Ar \setminus B$, then $d\{x_{i_1}, \dotsc, x_{i_r}, y\}$ is a sum with no term equal to $\{x_{i_1}, \dotsc, x_{i_r}\}$.  
Therefore, if $u \in \ker \rho$, then $\rho u = d \sigma$ where $\sigma$
 is a linear combination of $\{x_{i_1}, \dotsc, x_{i_{r+1}}\}$ with $
\{i_1, \dotsc, i_{r+1}\} \subseteq \{j_1, \dotsc, j_m\}$. In other 
words, since $\ker \phi$ does not contain any $r$-uple, $u$ is a linear combination of $[e_{i_1}, \dotsc, e_{i_{r+1}}]$, 
as required.
\end{itemize}
Let $A_5 = \frac{\Lambda(e_{j_1}, \dotsc, e_{j_m})}{\Lambda^{\geq r+1} (
e_{j_1}, \dotsc, e_{j_m}) \oplus \ker \rho}$. The map $\rho$ induces an 
injective map $\bar{\rho}$, and we define a map $\psi$ in the opposite 
direction \[
A_5 \stackrel{\bar{\rho}}{\to} H^\star(D_\Ar, d) \stackrel{\psi}{\to}A_5
\]
by sending $\{x_i\}$ to $[e_i]$  if $i \in \{j_1, \dotsc, j_m\}$ and 
zero if $i \not \in \{j_1, \dotsc, j_m\}$.  These two maps are morphisms
 of graded algebras and verify the following property~: $\psi \circ 
\bar{\rho} = \id$.  Finally, the lemmas~\ref{lem:b} and~\ref{lem:magic} give us two 
injective maps $\Lie(u,v) \to L_{(A_5,0)} \to \pi_\star \Omega M(\Ar) 
\otimes \Q.$
\end{proof}

With the two previous propositions, the next theorem is almost completely proved. We just need to put everything in place.

\begin{theorem} \label{theo:main}
Let $\Ar$ be a geometric arrangement such that every $x \in \Ar$ has 
$\codim(x) \geq 2$. Then $M(\Ar)$ is rationally hyperbolic if and only if there is 
an injective map $\Lie(u,v) \to \pi_\star(\Omega M(\Ar)) \otimes \Q$.
\end{theorem}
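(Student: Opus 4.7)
The plan is to prove the two directions separately. Propositions~\ref{prop:a} and~\ref{prop:b} essentially already contain the hard work for the forward (``hyperbolic implies injection'') direction, so the role of this proof is to orchestrate them, while the reverse direction is a direct consequence of the Felix--Halperin dichotomy.

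For the forward direction, suppose $M(\Ar)$ is rationally hyperbolic. Then in particular $M(\Ar)$ is not rationally elliptic, so the contrapositive of Proposition~\ref{prop:injrat} forces $\ker \phi \neq 0$. We may therefore introduce the integer $r$ from Section~\ref{sec:main} and split on whether $\ker \phi$ contains an $r$-uple $e_{i_1} \dotsm e_{i_r}$. If it does, Proposition~\ref{prop:a} supplies an injective map $\Lie(u,v) \to \pi_\star(\Omega M(\Ar)) \otimes \Q$; if it does not, Proposition~\ref{prop:b} does the same. Either case yields the required injection.

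For the converse, suppose such an injection $\Lie(u,v) \hookrightarrow \pi_\star(\Omega M(\Ar)) \otimes \Q$ exists. The free graded Lie algebra on two generators is infinite-dimensional (its homogeneous dimensions actually grow exponentially), hence $\pi_\star(\Omega M(\Ar)) \otimes \Q$ is infinite-dimensional as well. Now $M(\Ar)$ is simply connected, since every $x \in \Ar$ has complex codimension at least $2$ (i.e.\ real codimension at least $4$), and it has finite-dimensional rational cohomology because the finite complex $(D_\Ar, d)$ is a rational model. Consequently the Felix--Halperin dichotomy applies: $M(\Ar)$ is either rationally elliptic or rationally hyperbolic. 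Rational ellipticity would force $\pi_\star(\Omega M(\Ar)) \otimes \Q$ to be finite-dimensional, contradicting the previous sentence, so $M(\Ar)$ must be rationally hyperbolic.

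The main ``obstacle'' here is not really an obstacle at all: one only has to verify that the elliptic/hyperbolic dichotomy is applicable to $M(\Ar)$, namely the simple connectivity (from the codimension hypothesis) and the finite-dimensionality of rational cohomology (from the rational model of Section~\ref{sec:rmsa}). Beyond that, the theorem is pure bookkeeping on top of Propositions~\ref{prop:injrat},~\ref{prop:a} and~\ref{prop:b}.
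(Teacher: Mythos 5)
Your proof is correct and follows essentially the same route as the paper: the forward direction reduces to Propositions~\ref{prop:injrat},~\ref{prop:a} and~\ref{prop:b} via the case split on whether $\ker\phi$ contains an $r$-uple, and the converse uses the infinite-dimensionality of $\Lie(u,v)$ together with the dichotomy theorem. Your extra remarks checking the hypotheses of the dichotomy (simple connectivity from the codimension condition, finite-dimensional cohomology from the finite model) are a welcome addition but do not change the argument.
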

\begin{proof}
Suppose that $M(\Ar)$ is rationally hyperbolic. As shown at the beginning of this 
section, the map $\phi\de \Lambda(e_1, \dotsc, e_n) \to H^\star(M(\Ar), 
\Q)$ can not be injective, otherwise $M(\Ar)$ would be elliptic. 
Therefore $\ker \phi \neq 0$ and the propositions~\ref{prop:a} 
and~\ref{prop:b} show that there exists an injective map $\Lie(u,v) \to 
\pi_\star(\Omega M(\Ar)) \otimes \Q$.

Now, assume that such a map exists. In that case, the dimension of $\pi_\star(\Omega M(\Ar))\otimes \Q$, as a graded rational vector space, is not finite. Hence, the same is true for $\pi_\star M(\Ar) \otimes \Q$ and, by the dichotomy theorem in
rational homotopy theory, $M(\Ar)$ is rationally hyperbolic.
\end{proof}

\section{Technical results} \label{sec:technical}
This section contains the technical lemmas concerning $A_4$ and $A_5$ used in section~\ref{sec:main}. The aim is to prove the lemmas~\ref{lem:a},~\ref{lem:b},~\ref{lem:magic}. With that in mind, we consider the following differential graded algebras~:
\begin{align*}
(A_1, 0) &= \bigg( \Lambda(e_{i_2}, \dotsc, e_{i_r}) \oplus  (\oplus_{s 
\geq 1} \Q u_s), 0  \bigg), |u_s| = \sum\nolimits_{i=1}^r |e_{i_r}| + 
(s-1)|e_{i_1}| - s,\\
(A_2, d) &= \left( \frac{\Lambda(e_{i_1}, \dotsc, e_{i_r}, t, a)}{t
e_{i_1}, \dotsc, te_{i_r}, t^2}, d\right) \text{ with } de_{i_j} = 0, dt
= e_{i_1}\dotso e_{i_r}, da = e_{i_1}, \\
(A_3, d) &= \left( \frac{\Lambda(e_{i_1}, \dotsc, e_{i_r}, t)}{te_{i_1},
\dotsc, te_{i_r}, t^2}, d\right) \text{ with } de_{i_j} = 0, dt = 
e_{i_1}\dotso e_{i_r}, \\
(A_4, 0) &= \left(\frac{\Lambda(e_{i_1}, \dotsc, e_{i_r})}{e_{i_1}
e_{i_2} \dotso e_{i_r}}, 0 \right), \\
(A_5,0) &= \left( \frac{\Lambda(e_{j_1}, \dotsc, e_{j_m})}{I}, 0 
\right).
\end{align*}
where $I$ is the ideal of $\Lambda(e_{j_1}, \dotsc, e_{j_m})$ generated by
 the elements $e_{i_1} \dotso e_{i_{r+1}}$ and $[e_{i_1}, \dotsc, 
e_{i_{r+1}}]$. In $(A_1,0)$, the products $u_se_{i_j} = 0$ and $u_su_{s'} = 0$ for all 
$s, s'$ and $j$. Remark that $(A_2,d)$ is equal to $(A_3\otimes \Lambda 
a, d)$ with $da = e_{i_1}$. 

In order to reach our goal, we will need to understand a few properties of these algebras. The proofs make heavy use of rational homotopy theory (especially Sullivan minimal models). The theory and notations are explained in~\cite{fe00}.

\begin{lemma} \label{lem:algrel}
There exists two quasi-isomorphisms $(A_1, 0) \stackrel{\simeq}{\to} 
(A_2, d)$ and $(A_4, 0) \stackrel{\simeq}{\to} (A_3,d)$.
\end{lemma}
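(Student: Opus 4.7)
The plan is to verify each quasi-isomorphism by direct cohomology computation, exploiting the factorisation $(A_2, d) = (A_3, d) \otimes (\Lambda a, da = e_{i_1})$ so that the first statement is essentially a one-step refinement of the second.

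For the pair $(A_3, d)$ and $(A_4, 0)$: since $te_{i_j} = t^2 = 0$, the graded vector space underlying $A_3$ is $\Lambda(e_{i_1}, \dotsc, e_{i_r}) \oplus \Q t$, and the only nontrivial component of $d$ is $dt = e_{i_1}\dotso e_{i_r}$. Hence $H^\star(A_3) = \Lambda(e_{i_1}, \dotsc, e_{i_r})/(e_{i_1}\dotso e_{i_r}) = A_4$, and the natural projection $e_{i_j} \mapsto e_{i_j}$, $t \mapsto 0$ is a CDGA morphism (all imposed relations map to $0$, and $d$ commutes with it because $e_{i_1}\dotso e_{i_r}$ vanishes in $A_4$) realising this identification.

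For the pair $(A_1, 0)$ and $(A_2, d)$: I would first compute $H^\star(A_2)$ via the spectral sequence associated to the filtration of $A_2 = A_3 \otimes \Lambda a$ by powers of $a$. The $E_1$-page is $H^\star(A_3) \otimes \Lambda a = A_4 \otimes \Lambda a$, with $d_1$ the derivation determined by $d_1(a) = [e_{i_1}]$. A direct calculation yields
\[
E_\infty \cong \Lambda(e_{i_2}, \dotsc, e_{i_r}) \oplus \bigoplus_{s\geq 1} \Q \cdot e_{i_2}\dotso e_{i_r}\, a^s,
\]
the second summand surviving because $d_1(e_{i_2}\dotso e_{i_r} a^s) = \pm s\, e_{i_1}\dotso e_{i_r}\, a^{s-1} = 0$ in $A_4$. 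A degree check with $|u_s| = \sum_j |e_{i_j}| + (s-1)|e_{i_1}| - s$ and $|a| = |e_{i_1}| - 1$ matches this list term-by-term with the basis of $A_1$, and all cup products among the surviving classes vanish by $e_{i_j}^2 = 0$, so $H^\star(A_2) \cong A_1$ as graded algebras.

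To upgrade this isomorphism to an honest DGA morphism, I would define $\phi \de (A_1, 0) \to (A_2, d)$ by $\phi(e_{i_j}) = e_{i_j}$ for $j \geq 2$ and $\phi(u_s) = \alpha_s := e_{i_2}\dotso e_{i_r}\, a^s - s\, t\, a^{s-1}$. Using $d(a^s) = s\, e_{i_1}\, a^{s-1}$, $dt = e_{i_1}\dotso e_{i_r}$, and $t e_{i_1} = 0$, the two terms $d(e_{i_2}\dotso e_{i_r}\, a^s)$ and $s\, d(t\, a^{s-1})$ both equal $\pm s\, e_{i_1}\dotso e_{i_r}\, a^{s-1}$, so $d\alpha_s = 0$. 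The rigid relations $u_s e_{i_j} = 0$ and $u_s u_{s'} = 0$ of $A_1$ are preserved, since in every resulting product in $A_2$ at least one of $e_{i_j}^2 = 0$, $t e_{i_j} = 0$, $t^2 = 0$, or $(e_{i_2}\dotso e_{i_r})^2 = 0$ kicks in. By construction $[\alpha_s]$ represents the $E_\infty$-class $e_{i_2}\dotso e_{i_r}\, a^s$ in $H^\star(A_2) \cong A_1$, so $\phi$ is a quasi-isomorphism. The main obstacle is the construction of $\alpha_s$ itself: the naive lift $u_s \mapsto e_{i_2}\dotso e_{i_r}\, a^s$ is only a $d_1$-cocycle in $A_2$, and the correction term $-s t a^{s-1}$ must be chosen with exactly this coefficient to kill the resulting boundary; the pleasant surprise is that the corrected $\alpha_s$ still satisfies the nilpotence relations of $A_1$, which is precisely what makes the vanishing identities $t e_{i_j} = t^2 = 0$ built into $A_2$ indispensable.
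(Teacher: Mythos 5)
Your proposal is correct and follows essentially the same route as the paper: both treat $(A_3,d)\simeq(A_4,0)$ by observing that $t$ and $e_{i_1}\dotso e_{i_r}$ span an acyclic complement (you use the projection $A_3\to A_4$, which is in fact the cleaner choice since the paper's ``inclusion'' $A_4\to A_3$ is not multiplicative), and both exhibit the same explicit quasi-isomorphism $(A_1,0)\to(A_2,d)$, your $u_s\mapsto e_{i_2}\dotso e_{i_r}a^s - s\,ta^{s-1}$ being $s$ times the paper's representative, with the cohomology of $A_2$ computed by the same filtration idea (the paper packages it as the long exact sequence of the subcomplex $\Lambda(e_{i_1},\dotsc,e_{i_r},a)\subset A_2$ with vanishing connecting map plus the five lemma, you as the $a$-adic spectral sequence). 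The one assertion you should justify is the degeneration $E_\infty=E_2$; it does hold, e.g.\ because any higher differential out of $\Q\,e_{i_2}\dotso e_{i_r}a^s$ would have to land in a degree where the target column vanishes (or, equivalently, because every $E_2$-class is represented by an honest cocycle such as $\alpha_s$, and a dimension count finishes the argument).
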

\begin{proof}
It is clear that the inclusion $(A_4,0) \to (A_3, d)$ is a quasi-isomorphism, because, as a vector space, $A_3 = A_4 \oplus V$ where  $V$
admits $e_{i_1} \dotso e_{i_r}$ and $t$ as basis elements. Let's prove 
that there exists a quasi-isomorphism $\theta\de (A_1,0) \to (A_2, d)$. 
Consider the subalgebra $(B,d) = (\Lambda(e_{i_1}, \dotsc, e_{i_r}, a), 
d)$ of $(A_2, d)$.  Since $d(A_2) \subset B$, the differential in 
$A_2/B$ is zero.  Therefore, we have a short exact sequence of 
complexes~: \[
0 \to (B,d) \to (A_2, d) \to \left( A_2/B, 0\right) \to 0,
\]
and a long exact sequence in cohomology with $A_2/B = \oplus_{s \geq 0} 
\Q ta^s$.  By the connecting map, an element $ta^s$ of $H^\star(A_2/B, 0
)$ is sent on the cohomology class of $d(ta^s) = e_{i_1} \dotso e_{i_r} 
a^s$ in $B$. But $d\big(\frac{1}{s+1} e_{i_2} \dotso e_{i_r}a^{s+1}\big) = 
e_{i_1} \dotso e_{i_r}a^s$. Therefore, the connecting map is zero. It 
means that we have a short exact sequence of the cohomology algebras~: 
\[
0 \to H^\star(B,d) \to H^\star(A_2, d) \to H^\star(A_2 / B, 0) \to 0.
\]
The cohomology of $(B,d)$ is obviously $\Lambda(e_{i_2}, \dotsc, e_{i_r}
)$ and the cohomology of $(A_2/B, 0)$ is $A_2/B$.  Consider the map 
$\theta \de (A_1, 0) \to (A_2, 0)$ defined by $\theta(e_{i_j}) = 
e_{i_j}$, $j = 2, \dotsc, n$ and $\theta(u_s) = \frac{a^s}{s}e_{i_2} 
\dotso e_{i_r} - a^{s-1}t$. It is a morphism of differential graded algebras. This gives us the
following commutative diagram~: \[
\xymatrix@C=4mm{ 0 \ar[r] & \Lambda(e_{i_2}, \dotsc, e_{i_r}) \ar[r] 
\ar[d]^\simeq & \Lambda(e_{i_2}, \dotsc, e_{i_r}) \oplus \oplus_{s \geq 
1} \Q u_s \ar[r] \ar[d]^{H^\star \theta} & \oplus_{s \geq 1} \Q u_s 
\ar[d]^\simeq \ar[r] & 0 \\ 0 \ar[r] & H^\star(B, d) \ar[r] & H^\star(
A_2, d) \ar[r] & H^\star(A_2/B, 0) \ar[r] & 0}
\]
The 5-lemma proves that $H^\star \theta$ is an isomorphism, or, in other
words, that $\theta$ is a quasi-isomorphism.
\end{proof}

\begin{lemma} \label{lem:surj}
Let $m \de (\Lambda V, d) \to (A_3, d)$ and $m' \de (\Lambda W, d) \to (
A_2, d)$ be the Sullivan minimal models of $(A_3, d)$ and $(A_2,d)$, and
 $f \de (\Lambda V,d) \to (\Lambda W,d)$ a minimal model of the 
injection $(A_3,d) \to (A_2, d)$. Then $Qf \de V \to W$ is surjective.
\end{lemma}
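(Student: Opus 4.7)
The plan is to exploit the Hirsch-extension structure $(A_3,d) \hookrightarrow (A_2,d) = (A_3 \otimes \Lambda a,d)$ with $da = e_{i_1}$ and transport it to the level of minimal models. Since $e_{i_1}$ is an indecomposable cohomology class in $H^\star(A_3) = A_4$ (it is an algebra generator of $A_4$ not swept into the relation $e_{i_1}\dotso e_{i_r} = 0$), the minimal model $m \de (\Lambda V, d) \to (A_3, d)$ can be arranged so that $V$ contains a closed generator $\tilde{e}_{i_1}$ with $m(\tilde{e}_{i_1}) = e_{i_1}$.

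I then form the KS-extension $E = (\Lambda V \otimes \Lambda a', d)$ with $da' = \tilde{e}_{i_1}$ and extend $m$ to $\tilde{m} \de E \to A_2$ by $\tilde{m}(a') = a$. Filtering $E$ and $A_2$ by $a'$-degree and $a$-degree respectively, the associated graded of $\tilde{m}$ is $m \otimes \id$, which is a quasi-isomorphism; a standard spectral-sequence comparison then gives that $\tilde{m}$ itself is a quasi-isomorphism. Hence $E$ is a (non-minimal) Sullivan model of $A_2$, the failure of minimality coming from the linear term $da' = \tilde{e}_{i_1}$.

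To identify $(\Lambda W, d)$ and the induced $Qf$, I invoke the standard Sullivan minimization: the minimal model of $E$ has indecomposables $W$ canonically isomorphic (as a graded vector space) to the cohomology $H^\star(Q(E), d_1)$ of the linear part $d_1$ of the differential on $Q(E)$. Since $Q(E) = V \oplus \Q a'$, minimality of $\Lambda V$ gives $d_1|_V = 0$, while by construction $d_1(a') = \tilde{e}_{i_1}$; consequently $H^\star(Q(E), d_1) = V / \Q \tilde{e}_{i_1}$. A lift $f \de \Lambda V \to \Lambda W$ of the injection $A_3 \hookrightarrow A_2$ is constructed (up to homotopy) by lifting the inclusion $i \de \Lambda V \hookrightarrow E$ through the quasi-isomorphism $\ell \de \Lambda W \xrightarrow{\simeq} E$, and on $H^\star(Q(\cdot), d_1)$ this composite becomes the canonical projection $V \twoheadrightarrow V / \Q \tilde{e}_{i_1}$. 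Hence $Qf$ is this quotient map, and in particular it is surjective.

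The main obstacle is rigorously justifying the Sullivan minimization: that $W \cong H^\star(Q(E), d_1)$ and that $Qf$ genuinely is the projection. This amounts either to splitting off the acyclic pair $(\tilde{e}_{i_1}, a')$ from $E$ by an inductive change of variables on the remaining generators of $V$ (eliminating any $\tilde{e}_{i_1}$-dependence from their differentials), or to invoking the fact that homotopic CDGA morphisms out of a Sullivan source induce the same map at the level of $H^\star(Q(-), d_1)$, so that the homotopy relation $\ell f \simeq i$ pins $Qf$ down unambiguously.
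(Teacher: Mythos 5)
Your proposal is correct and follows essentially the same route as the paper: arrange the minimal model of $(A_3,d)$ so that a generator of $V$ maps to $e_{i_1}$, form the relative Sullivan (Hirsch) extension by an $a$ killing that generator, check it is a quasi-isomorphic model of $(A_2,d)$, and then minimize by discarding the contractible pair, so that $Qf$ becomes the projection $V \to V/\Q v_1$. The paper is terser at the minimization step (it simply quotients by the ideal generated by $a$ and $v_1$ and cites Lemma 14.2 of the reference for the quasi-isomorphism), whereas you spell out the same point via $H^\star(Q(E),d_1)$; no substantive difference.
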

\begin{proof}
Let $(v_1, v_2, \dotsc)$ be a basis of $V$. Since $de_{i_1} = 0$, 
rational homotopy theory shows that we can construct the map $m$ with 
the property that $m(v_1) = e_{i_1}$.  We form then the relative 
Sullivan model~: $(\Lambda V \otimes \Lambda a, d)$ with $da = v_1$. The
 map $m \otimes \id \de (\Lambda V \otimes \Lambda a, d) \to (A_3 
\otimes \Lambda a, d)$ extends the map $m$ and makes commutative the 
following diagram. \[
\xymatrix{(\Lambda V, d) \ar@{^{(}->}[r]^{i} \ar[d]_m & (\Lambda V \otimes \Lambda a, d) \ar[d]^{m \otimes \id} \\ (A_3, d) 
\ar@{^{(}->}[r]^{j\qquad} & (A_3 \otimes \Lambda a, d) = (A_2, d)}
\]
Since $m$ is a quasi-isomorphism, $m\otimes \id$ is also a quasi-
isomorphism (see~lemma 14.2 of~\cite{fe00}).  This shows that $m \otimes
 \id$ is a Sullivan model of the map $j \circ m$.

The relative Sullivan algebra $(\Lambda V \otimes \Lambda a, d)$ is a 
Sullivan algebra, and almost minimal~: to make it minimal, we only need to divide by the ideal generated by $a$ 
and $v_1$.  The projection map $p \de (\Lambda V \otimes \Lambda a, d) 
\to (\Lambda(v_2, v_3, \dotsc), d)$ is such a quasi-isomorphism.  So,  
$(\Lambda(v_2, v_3, \dotsc), d)$ is a minimal model of $(A_2, d)$.  We 
conclude by letting $f = p \circ i$. The map $f$ is such that the linear
 map $Qf$ is simply the projection $V \to V/v_1$, which is surjective. 
\end{proof}

\begin{lemma} \label{lem:retlie}
Let $(\Lambda V, d)$ be a minimal algebra and $f \de (\Lambda V, d) \to 
(E, d)$ be a quasi-isomorphism of differential graded algebras. If there
exists $x, y \in V$ such that $x$ and $y$ are linearly independant, $dx
= dy = 0$ and $f(xy) = f(x^2) = f(y^2) = 0$, then there exists two 
morphisms of Lie algebras $\Lie(u,v) \stackrel{i}{\to} L_{(\Lambda V, 
d)} \stackrel{p}{\to} \Lie(u,v)$ such that $p \circ i = \id$. In particular, $i$ is injective.
\end{lemma}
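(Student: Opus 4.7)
My plan is to build a minimal Sullivan algebra $(M, d_M)$ whose homotopy Lie algebra is exactly $\Lie(u,v)$, construct a cdga morphism $\iota \de M \to (\Lambda V, d)$ whose linear part sends the two distinguished generators of $M$ to $x$ and $y$, and then use the freeness of $\Lie(u,v)$ to produce the retract.

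First I take $A = \Lambda(x',y')/(x'^2, x'y', y'^2)$ with $|x'|=|x|$, $|y'|=|y|$ and zero differential: a formal cdga with the cohomology algebra of the wedge $S^{|x|} \vee S^{|y|}$. Let $(M, d_M) = (\Lambda W, d_M) \stackrel{\simeq}{\to} (A, 0)$ be its minimal Sullivan model; by Hilton's theorem (equivalently, by formality of $A$ together with the classical computation of the rational homotopy Lie algebra of a wedge of spheres) we have $L_M = \Lie(u,v)$ with $|u| = |x|-1$, $|v| = |y|-1$. The generators $x_0, y_0 \in W$ corresponding to $x', y'$ are cocycles, and under the identification $L_M = \Lie(u,v)$ they are dual to $u$ and $v$.

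Next, the hypotheses $f(x^2) = f(xy) = f(y^2) = 0$ hold strictly in $E$, so they define a cdga morphism $A \to E$ by $x' \mapsto f(x)$, $y' \mapsto f(y)$. Composing with the quasi-isomorphism $M \to A$ yields $g \de M \to E$, which I lift through the quasi-isomorphism $f$ by cofibrancy of the Sullivan algebra $M$, producing $\iota \de M \to \Lambda V$ with $f \iota$ homotopic to $g$. Set $p = L(\iota) \de L_{(\Lambda V, d)} \to L_M = \Lie(u,v)$. Minimality of $\Lambda V$ forces every coboundary to lie in $\Lambda^{\geq 2} V$, so $\iota(x_0) - x \in \Lambda^{\geq 2} V$ and the linear part $Q\iota(x_0)$ equals $x$; similarly $Q\iota(y_0) = y$. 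Since $x, y$ are linearly independent in $V$, the dual map $p$ sends the elements $\tilde u, \tilde v \in L_{(\Lambda V, d)}$ dual to $x, y$ to $u$ and $v$ respectively.

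Finally, I define $i \de \Lie(u,v) \to L_{(\Lambda V, d)}$ by $u \mapsto \tilde u$, $v \mapsto \tilde v$ and extend by freeness of $\Lie(u,v)$ as a graded Lie algebra. Then $p \circ i$ and $\id_{\Lie(u,v)}$ are Lie algebra morphisms that agree on the free generators $u, v$, so they coincide, giving $p \circ i = \id$ and the injectivity of $i$. The main obstacle is the identification $L_M = \Lie(u,v)$ (which relies on Hilton's theorem) and the production of a lift $\iota$ with the correct linear behavior on $x_0, y_0$; once these are in hand, the extension step via freeness is formal.
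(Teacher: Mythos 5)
Your overall route is the same as the paper's: model a wedge of two spheres by a cdga with trivial products, take its minimal Sullivan model $(\Lambda W,d_M)$ with homotopy Lie algebra $\Lie(u,v)$, lift the induced map into $E$ through the quasi-isomorphism $f$ to obtain $\iota\de \Lambda W\to\Lambda V$, set $p=L\iota$, and split $p$ using freeness of $\Lie(u,v)$. The one step that is not yet justified is the assertion that $p(\tilde v)=v$, for which you invoke only the linear independence of $x$ and $y$ in $V$. Since $p$ is dual to $Q\iota\de W\to V$, the element $p(\tilde v)$ is the functional $w\mapsto \tilde v(Q\iota(w))$ on $W$, so to get $p(\tilde v)=v$ you must also check that $\tilde v$ vanishes on $Q\iota(w)$ for every generator $w$ of $W$ in degree $|y|$ other than $x_0,y_0$. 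Normalizing $|x|\le|y|$, such a generator exists exactly when $|x|$ is even and $|y|=2|x|-1$: then $W$ contains a generator $t_1$ with $dt_1=x_0^2$ and $|t_1|=|y|$, and nothing in your construction prevents $Q\iota(t_1)$ from having a nonzero $y$-component. This is precisely the case the paper isolates and handles by writing out the generators of $W$ in degrees $\le |y'|$ and examining $Q\psi$ there; your one-line justification skips it.

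The gap is local and repairable, and it does not invalidate the strategy. Either (a) correct the lift: $y$ is a cocycle of degree $|t_1|$, so replacing $\iota(t_1)$ by $\iota(t_1)-\lambda y$ for a suitable scalar $\lambda$ removes the offending linear component without disturbing the relation $d\iota(t_1)=\iota(x_0)^2$; or (b) accept that in the bad case $p(\tilde v)=v+c\,[u,u]$ for some scalar $c$ (the element of $L_{(\Lambda W,d_M)}$ dual to $t_1$ is a multiple of $[u,u]$), note that $p\circ i$ is then the identity modulo brackets and hence an automorphism $\alpha$ of the free Lie algebra $\Lie(u,v)$, and replace $p$ by $\alpha^{-1}\circ p$ to obtain $p\circ i=\id$. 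With either patch your argument is correct and essentially coincides with the paper's proof.
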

\begin{proof}
Let's consider the differential graded algebra $(B, 0) = (\Q \oplus \Q 
x' \oplus \Q y', 0)$ with all products equal to zero and $|x'| = |x|, 
|y'| = |y|$.  We can define a morphism of differential graded algebras $\theta \de (B,0) \to 
(E,d)$ with $\theta(x') = f(x)$ and $\theta(y') = f(y)$.

Notice that $(B,0)$ is a model of a wedge of two spheres. Its minimal 
Sullivan model $\rho \de (\Lambda W, d) \to (B,0)$ is such that 
$L_{(\Lambda W, d)} = \Lie(u,v)$ with $|u| = 
|x'| - 1$ and $|v| = |y'| - 1$. Without loss of generality, we 
can assume that $|x'| \leq |y'|$.

The existence of the Sullivan minimal model is proved by an inductive 
process.  Looking closely at this construction, we can easily (in low 
degree) construct a basis for $W$. 
\begin{itemize}
\item If $|x'|$ is odd or if $|x'| = |y'|$, then $\Lambda W = \Lambda
(x', y', t, \dotsc)$ with $dt = x'y'$. In degree less than $|y'|$, $W$ 
has only two generators~: $x', y'$.
\item If $|x'|$ is even and if $|x'| <|y'|$, then $\Lambda W = \Lambda
(x', y', t_1, t_2, \dotsc)$ with $dt_1 = x'^2$ and $dt_2 = x'y'$.
\end{itemize}
Let's construct a map $\psi \de (\Lambda W, d) \to (\Lambda V, d)$.  By 
the lifting lemma, such a map can be obtained by lifting $\theta \circ 
\rho$ along $f$.  But we can have more~: the lift $\psi$ can be 
constructed inductively along a basis of $W$, so we can choose $\psi(x') = x$ and $\psi(y') = y$. 
 \[
\xymatrix{ & (\Lambda V, d) \ar[d]^{f} \\ (\Lambda W, d) \ar[r]_{\theta 
\circ \rho} \ar@{-->}[ur]^\psi & (E, d)}
\]
Now, let's see what happens for the induced map 
$L_{(\Lambda V, d)} \to \Lie(u, v)$.
\begin{itemize}
\item If $|x'|$ is odd or if $|x'| = |y'|$, then the linear map $Q\psi 
\de W \to V$ is injective in degree $\leq |y'|$ (it is completely 
described by $Q\psi(x') = x$ and $Q\psi(y') = y$). So, the dual map is 
surjective.  It implies that $L\psi \de L_{(\Lambda V, d)} \to \Lie(u, v)$ is surjective in degree $\leq |v|$, which
means that $u$ and $v$ are in the image of $L\psi$.
\item If $|x'|$ is even and if $|x'| <|y'|$, then we can do exactly the 
same reasoning if $|t_1| > |y'|$.  If $|t_1| \leq |y'|$, then there is a 
slight difference. In that case, $x^2 = \psi(x'^2) = \psi(dt_1) = d\psi(t_1)
$. So, $x^2$ is a boundary. There is a $z \in V$ such that $x^2 = dz$.  
The map $Q\psi$ in degree $\leq |y'|$ is completely described by $Q\psi(
x') = x, Q\psi(y') = y$ and $Q\psi(t_1) = z$.  It is injective in degree 
$\leq |y'|$. So, the dual map is surjective in degree $\leq 
|v|$, which also means that $u$ and $v$ are in 
the image of $L\psi$.
\end{itemize}
In both cases, the map $L\psi \de L_{(\Lambda V, d)} \to \Lie(u,
 v)$ has $u$ and $v$ in its image.  Therefore, 
we can choose $a, b \in L_{(\Lambda V, d)}$ such that $L\psi(a) = 
u$ and $L\psi(b) = v$.  Let $p = L\psi$ and consider the
map $i \de \Lie(u, v) \to L_{(\Lambda V, d)}$ defined by
$i(u) = a$ and $i(v) = b$.  These two maps verify $p 
\circ i = \id$.
\end{proof}

Now, the preliminary work is done. The main lemmas of this section can 
be proved.

\begin{lemma} \label{lem:a}
There exists an injective map $\Lie(u,v) \to L_{(A_4,0)}$.
\end{lemma}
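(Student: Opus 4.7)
The plan is to embed $\Lie(u,v)$ first into $L_{(A_1,0)}$, exploiting the triviality of all products among the $u_s$ in $A_1$, and then transport that embedding to $L_{(A_4,0)}$ by dualizing the surjection produced by Lemma~\ref{lem:surj}. By Lemma~\ref{lem:algrel}, a Sullivan minimal model $(\Lambda V,d)$ of $(A_3,d)$ is also one of $(A_4,0)$, and a minimal model $(\Lambda W,d)$ of $(A_2,d)$ is also one of $(A_1,0)$, so $L_{(\Lambda V,d)} = L_{(A_4,0)}$ and $L_{(\Lambda W,d)} = L_{(A_1,0)}$. Lemma~\ref{lem:surj} then supplies a Sullivan model $f\de (\Lambda V,d) \to (\Lambda W,d)$ of the inclusion $(A_3,d) \hookrightarrow (A_2,d)$ whose induced map on indecomposables $Qf\de V \to W$ is surjective; since homotopy Lie algebras are, up to degree shift, the linear duals of the spaces of indecomposables, this dualizes to an injective Lie algebra morphism $Lf\de L_{(\Lambda W,d)} \to L_{(\Lambda V,d)}$.

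Next, I would apply Lemma~\ref{lem:retlie} to the minimal model $m\de (\Lambda W,d) \to (A_1,0)$. The elements $u_1,u_2 \in A_1$ are closed (the differential vanishes on $A_1$), indecomposable, and of distinct degrees (the difference being $|e_{i_1}|-1\geq 2$); by the very definition of $A_1$ the products $u_1u_2$, $u_1^2$ and $u_2^2$ all vanish. Choosing closed generators $w_1,w_2 \in W$ that represent these classes gives $m(w_s) = u_s$ on the nose, since $(A_1,0)$ has trivial differential and hence no cocycle-representative ambiguity. Consequently $m(w_1w_2) = m(w_1^2) = m(w_2^2) = 0$ in $A_1$, and Lemma~\ref{lem:retlie} provides an injection $\Lie(u,v) \hookrightarrow L_{(\Lambda W,d)}$.

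Composing this injection with $Lf$ yields the desired map $\Lie(u,v) \to L_{(\Lambda V,d)} = L_{(A_4,0)}$. The step I expect to be most delicate, and would verify first, is the existence of closed generators $w_1,w_2 \in W$ with $m(w_s)=u_s$ in the correct degrees; this is standard, following from the correspondence between closed indecomposables of a minimal Sullivan model and indecomposable cohomology classes of the target, applied to the indecomposable closed classes $u_1,u_2$ of $A_1$.
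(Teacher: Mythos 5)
Your proof is correct and takes essentially the same route as the paper: identify $L_{(A_1,0)}$ and $L_{(A_4,0)}$ with $L_{(A_2,d)}$ and $L_{(A_3,d)}$ via Lemma~\ref{lem:algrel}, dualize the surjection of Lemma~\ref{lem:surj} to get an injection $L_{(A_1,0)} \to L_{(A_4,0)}$, and apply Lemma~\ref{lem:retlie} to the minimal model of $(A_1,0)$. The only (harmless) difference is your choice of the pair $u_1,u_2$ where the paper uses $e_{i_2},u_1$; both pairs are closed, linearly independent, with all squares and products vanishing in $A_1$, and the delicate point you flag (realizing the classes by generators of the minimal model) is treated at the same level of detail in the paper's own argument.
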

\begin{proof}
Let $L_1 = L_{(A_1,0)}$ and $L_2 = L_{(A_4,0)}$. The proof will be done 
by showing the existence of two injective maps  \[
\Lie(u,v) \stackrel{g_1}{\to} L_1 \stackrel{g_2}{\to} L_2.
\]
\emph{Step 1~: constructing the map $g_2$.} 
By lemma~\ref{lem:algrel}, $(A_1,0) \stackrel{\simeq}{\to} (A_2,d)$ and 
$(A_4,0) \stackrel{\simeq}{\to} (A_3,d)$, so $L_1 = L_{(A_2, 0)}$ and 
$L_2 = L_{(A_3,d)}$. The lemma~\ref{lem:surj} gives us a map $f \de 
(\Lambda V, d) \to (\Lambda W, d)$ between the Sullivan minimal models of $(A_3,d)$ and $(A_2,d)$. Applying the homotopy Lie algebra 
functor to the map gives a map $Lf \de L_1 \to L_2$. The surjectivity of
$Qf$ implies that $Lf$ is injective (see~\cite{fe00}, chapter 21). Now,
$g_2 = Lf$ is the required map.

\emph{Step 2~: constructing the map $g_1$.} By lemma~\ref{lem:retlie}, 
we only need to show that if $m \de (\Lambda V, d) \to (A_1, 0)$ is a 
Sullivan minimal model, then there exists $x,y \in V$ such that $x,y$ 
are linearly independant, $dx = dy = 0$ and $m(xy) = m(x^2) = m(y^2) 
= 0$.

Since $m$ is a quasi-isomorphism, $H^\star m \de H^\star(\Lambda V, d) 
\to (A_1, 0)$ is surjective. So, there exists $[x]$ and $[y]$ in 
$H^\star(\Lambda V, d)$ such that $H^\star m([x]) = e_{i_2}$ and 
$H^\star m([y]) = u_1$. It gives us $x$ and $y$ in $(\Lambda V, d)$ such
that $dx = dy = 0$, $m(x) = e_{i_2}$ and $m(y) = u_1$.  But $x$ and $y$ 
can not be in $\Lambda^{\geq 2} V$ because, otherwise, $e_{i_2} = m(x)$ 
would be in $\Lambda^{\geq 2}(e_{i_2}, \dotsc, e_{i_r})$ and $u_1 = 
m(y)$ would be in $\Lambda^{\geq 2}(u_1)/u_1^2$. Therefore, $x$ and $y$ 
are in $V$.  Finally, the lemma~\ref{lem:retlie} gives us the map $g_1$.
\end{proof}

\begin{lemma} \label{lem:b}
There exists an injective map $\Lie(u,v) \to L_{(A_5,0)}$.
\end{lemma}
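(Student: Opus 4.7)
My plan is to mimic the proof of lemma~\ref{lem:a}, constructing analogous auxiliary CDGAs adapted to $A_5$. I would first build $(A_3^{(5)},d)$ by formally adjoining a generator $t_R$ with $dt_R=R$ (subject to $t_R e_{j_k}=t_R^2=0$, as in $A_3$) for each defining relation $R$ of $A_5$, so that $(A_5,0)\stackrel{\simeq}{\to}(A_3^{(5)},d)$. Then I would set $(A_2^{(5)},d)=(A_3^{(5)}\otimes\Lambda(a),d)$ with $da=e_{i_1}$, and let $(A_1^{(5)},0)$ be a formal algebra quasi-isomorphic to $(A_2^{(5)},d)$, obtained by computing $H^\star(A_2^{(5)},d)$ via the same short-exact-sequence argument as in lemma~\ref{lem:algrel}. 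The whole game is to arrange $(A_1^{(5)},0)$ so that it carries at least one indecomposable cocycle $u_1$ with trivial products against everything else.

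With these in hand, Step~1 of the proof of lemma~\ref{lem:a} transports verbatim: lemma~\ref{lem:surj} applied to the inclusion $(A_3^{(5)},d)\hookrightarrow(A_2^{(5)},d)$ yields an injective map $g_2\de L_{(A_1^{(5)},0)}\to L_{(A_5,0)}$. For Step~2, I would take $x$ to be a cocycle lifting some $e_{j_k}$ with $k\neq i_1$ and $y$ to be a cocycle lifting $u_1$; the identities $m(xy)=m(x^2)=m(y^2)=0$ then follow from the exterior grading together with the trivial-product structure of $A_1^{(5)}$, and lemma~\ref{lem:retlie} produces the desired injective $g_1\de\Lie(u,v)\to L_{(A_1^{(5)},0)}$. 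Composing $g_2\circ g_1$ gives the claimed map.

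The main obstacle lies in verifying the existence of $u_1$, which reduces to analysing the cohomology of $(A_2^{(5)},d)$. The short-exact-sequence computation (mirroring lemma~\ref{lem:algrel}) gives $H^\star(A_2^{(5)},d)\cong A_5/(e_{i_1})\oplus\bigoplus_{s\geq 1}\bigl(\mathrm{Ann}_{A_5}(e_{i_1})/(e_{i_1})\bigr)a^s$, so everything boils down to understanding the annihilator of $e_{i_1}$ modulo the principal ideal $(e_{i_1})$ in $A_5$. Since $A_5$ is concentrated in exterior-degree at most $r$, the whole top piece $\overline{\Lambda^r}$ sits inside $\mathrm{Ann}_{A_5}(e_{i_1})$; the relations $[e_{i_1},\ldots,e_{i_{r+1}}]=0$ together with lemma~\ref{lem:diff} control exactly when such a class already belongs to $(e_{i_1})$. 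The hypotheses of Prop.~\ref{prop:b} (no $r$-uple in $\ker\phi$ but a genuine $(r{+}1)$-relation), combined with the independence of every $r$-subset of $B$, should guarantee the survival of a top-degree class outside $(e_{i_1})$, giving the required $u_1$; in the degenerate case where no such class survives, $(A_1^{(5)},0)$ itself turns out to be an $A_4$-type algebra and lemma~\ref{lem:a} finishes the job.
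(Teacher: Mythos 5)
Your strategy breaks down exactly at the point you flag as the main obstacle: the class $u_1$ does not exist where you look for it. Every word-length-$r$ monomial of $A_5$ can be rewritten, using the relation $[e_{i_1},e_{k_1},\dotsc,e_{k_r}]=0$, as a linear combination of monomials divisible by $e_{i_1}$ --- this is precisely the computation the paper performs to exhibit the basis $\{e_1e_{j_2}\dotsm e_{j_r}\}$ of $A_5^r$, run with $e_{i_1}$ in the role of $e_1$. Hence the entire top piece satisfies $A_5^r\subseteq(e_{i_1})$ and contributes nothing to $\mathrm{Ann}_{A_5}(e_{i_1})/(e_{i_1})$; in examples (already $r=2$) nothing survives in lower word length either, so your $H^\star(A_2^{(5)},d)$ collapses to $A_5/(e_{i_1})$ in $a$-degree zero. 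The fallback ``degenerate case'' does not rescue this: $A_5/(e_{i_1})\cong\Lambda(e_{j_2},\dotsc,e_{j_m})/\Lambda^{\geq r}(e_{j_2},\dotsc,e_{j_m})$ is a truncated exterior algebra on $m-1$ generators, which is of type $A_4$ only when $m=r+1$, and $B$ may well strictly contain $\{x_{i_1},\dotsc,x_{i_{r+1}}\}$, so Lemma~\ref{lem:a} does not apply. (An iterated division by generators down to $r$ of them could be made to work, but that is a different argument you would have to set up.) There is also a defect upstream: adjoining one $t_R$ per defining relation does not give a quasi-isomorphism $(A_5,0)\to(A_3^{(5)},d)$, because the degree-$r$ relations $[e_{i_1},\dotsc,e_{i_{r+1}}]$ are linearly dependent (the paper shows by a dimension count that their span equals that of the $[e_1,e_{i_2},\dotsc,e_{i_{r+1}}]$), so each linear syzygy $\sum c_Rt_R$ with $\sum c_RR=0$ becomes a spurious cocycle; and since $t_Re_{j_k}=0$, nothing kills the monomials of word length at least $r+2$.

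The paper's proof takes a different, more direct route that avoids the $A_1/A_2/A_3$ machinery entirely. It first settles the linear algebra of $A_5$ (a basis of $A_5^r$ and a spanning set $[e_1,e_{i_2},\dotsc,e_{i_{r+1}}]$ of the relation space of the correct dimension), then builds the explicit model $(B,d)$ obtained from $\Lambda(e_{j_1},\dotsc,e_{j_m})/\Lambda^{>r}$ by adjoining generators $a_i$ with $da_i=[e_1,e_{i_2},\dotsc,e_{i_{r+1}}]$ and all products $a_ia_j=a_ie_j=0$, shows the projection $(B,d)\to(A_5,0)$ is a quasi-isomorphism because the ideal generated by the $a_i$ and $da_i$ is acyclic, and feeds generators of the resulting minimal model into Lemma~\ref{lem:retlie}. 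To salvage your approach you would need either to produce the vanishing-product cocycles on a model of $A_5$ itself, as the paper does, or to justify the iterated reduction to a genuine $A_4$; as written, the proposal does not prove the lemma.
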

\begin{proof}
Recall that $A_5$ is the quotient of $\Lambda(e_{j_1}, \dotsc, e_{j_m})$
 by the ideal $I$ generated by the elements $e_{i_1} \dotso e_{i_{r+1}}$
 and $[e_{i_1}, \dotsc, e_{i_{r+1}}]$.  It is clear that $A_5^{>r} = 0$.
Let's prove that a basis of $A_5^r$ is given by the classes of the 
elements $e_1e_{j_2} \dotso e_{j_r}$ with $1 < j_2 < \dotsc < j_r \leq 
m$.
\begin{itemize}
\item Let $e_{i_1}\dotso e_{i_r} \in A_5^r$, with $i_1 < \dotso < i_r$. 
If $i_1 = 1$, then it is trivially a linear combination of elements $e_1 e_{j_2} 
\dotso e_{j_r}$. If $i_1 > 1$, then we know that $[e_1, e_{i_1}, \dotsc,
e_{i_r}] = 0$. So, it is also a linear combination of such elements.  It
 shows that these elements generate $A_5^r$.
\item If $1 < i_1 < \dotso < i_{r+1} \leq m$, then
\begin{align*}
\sum_{j=1}^{r+1} (-1)^{j+1}[e_1, e_{i_1}, & \dotsc, \widehat{e_{i_j}}, 
\dotsc, e_{i_{r+1}}] = \sum_{j=1}^{r+1} (-1)^{j+1}\bigg( -e_{i_1}\dotso 
\widehat{e_{i_j}} \dotso e_{i_{r+1}} \\
&\quad + \sum_{k=1}^{j-1} (-1)^{k+1} e_1e_{i_1} \dotso \widehat{e_{i_k}}
\dotso \widehat{e_{i_j}} \dotso e_{i_{r+1}} \\
& \quad + \sum_{k=j+1}^{r+1} (-1)^{k} e_1 e_{i_1} \dotso \widehat{
e_{i_j}} \dotso \widehat{e_{i_k}} \dotso e_{i_{r+1}}\bigg) \\
& = \sum_{j=1}^{r+1} (-1)^{j}e_{i_1}\dotso \widehat{e_{i_j}} \dotso 
e_{i_{r+1}} =[e_{i_1}, \dotsc, e_{i_{r+1}}].
\end{align*}
It shows that the vector space generated by every $[e_{i_1}, \dotsc, 
e_{i_{r+1}}]$ is equal to the vector space generated by the elements 
$[e_1, e_{i_2}, \dotsc, e_{i_{r+1}}]$ with $1 < i_2 < \dotso < i_{r+1}$.
\item Let's consider the following short exact sequence \[
0 \to \langle [e_{i_1}, \dotsc, e_{i_{r+1}}] \rangle \to \Lambda^r(
e_{j_1}, \dotsc, e_{j_m}) \to A_5^r \to 0.
\]
Let $d_1$ be the dimension of the vector space generated by the elements
 $e_1 e_{i_2} \dotso e_{i_r}$, with $1 < i_2 < \dotso < i_r$, in $\Lambda^r(e_{j_1}, 
\dotsc, e_{j_m})$ and $d_2$ be the dimension of the vector space 
generated by the elements $e_{i_1} e_{i_2} \dotso e_{i_{r}}$ with $1 <
 i_1 < \dotso <i_r$.  We have~: $\dim \Lambda^r(e_{j_1}, \dotsc, e_{j_m}) = d_1 + d_2$
, $\dim A_5^r \leq d_1$, $\dim \langle [e_{i_1}, \dotsc, e_{i_{r+1}}] 
\rangle \leq d_2$. So, $\dim A_5^r = d_1$, and the elements $e_1 e_{j_2}
\dotso e_{j_{r}}$ form a basis of $A_5^r$.
\end{itemize}
Let $I$ be the set of every sequence $1 < i_1 < \dotso < i_{r+1}$ and $(B,d)$ the differential graded algebra defined by \[
B = \frac{\Lambda(e_{j_1}, \dotsc, e_{j_m})}{\Lambda^{>r}(e_{j_1}, 
\dotsc, e_{j_m})} \oplus \left(\oplus_{i \in I} a_i \right)
\]
and $d(a_i) = [e_1, e_{i_2}, \dotsc, e_{i_{r+1}}]$.  The product in $B$ 
is defined by $a_i \cdot a_j = a_i \cdot e_j = 0$. The ideal generated
by the $a_i$ et the $da_i$ is acyclic, and the quotient map is a quasi-
isomorphism~: $\phi \de (B,d) \to (A_5, 0)$.

Therefore, the differential graded algebras $(A_5,0)$ and $(B,d)$ have 
the same minimal model.  Let's construct the minimal model of $(B,d)$. 
We want $\theta \de (\Lambda W, d) \to (B,d)$.   $W = (e_1, \dotsc, e_m,
a_I, \dotsc)$ with $\theta(e_i) = e_i$, $\theta(a_i) = a_i$.  Because
$\theta(e_i^2) = \theta(e_i a_j) = \theta(a_j^2) = 0$, 
lemma~\ref{lem:retlie} shows that $L_{(\Lambda W, d)} = L_{(A_5,0)}$ 
contains a Lie subalgebra $\Lie(u,v)$.
\end{proof}

\begin{lemma} \label{lem:magic}
If $(A,0)$ is a 1-connected differential graded algebra, $X$ is a formal
 space and if there exists two maps $f\de A \to H^\star(X, \Q)$ and $g
\de H^\star(X, \Q) \to A$ such that $g \circ f = \id_A$, then there 
exists two morphisms of Lie algebras $\tilde{f} \de L_X \to L_{(A,0)}$ 
and $\tilde{g} \de L_{(A,0)} \to L_X$ such that $\tilde{f} \circ 
\tilde{g}$ is an isomorphism. In particular, $\tilde{g}$ is an injective
map.
\end{lemma}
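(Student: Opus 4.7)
The plan is to translate everything into the language of Sullivan minimal models and invoke homotopy-invariance of the homotopy Lie algebra functor. Because $X$ is formal, its minimal Sullivan model may equally be regarded as a minimal Sullivan model $m_X \de (\Lambda V_X, d) \to (H^\star(X, \Q), 0)$, so $L_X = L_{(H^\star(X, \Q), 0)}$. Since $(A, 0)$ is 1-connected, it also admits a minimal Sullivan model $m_A \de (\Lambda V_A, d) \to (A, 0)$, and by definition $L_{(A, 0)}$ comes from this model.

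Next I would lift $f$ and $g$ to the level of minimal models. The lifting lemma, applied to $f \circ m_A \de (\Lambda V_A, d) \to (H^\star(X, \Q), 0)$ and the quasi-isomorphism $m_X$, produces a morphism $f^\sharp \de (\Lambda V_A, d) \to (\Lambda V_X, d)$ with $m_X \circ f^\sharp \simeq f \circ m_A$; the same argument applied to $g \circ m_X$ and $m_A$ yields $g^\sharp \de (\Lambda V_X, d) \to (\Lambda V_A, d)$ with $m_A \circ g^\sharp \simeq g \circ m_X$. Combining these two homotopies with the hypothesis $g \circ f = \id_A$ gives $m_A \circ (g^\sharp \circ f^\sharp) \simeq g \circ f \circ m_A = m_A$. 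Thus $g^\sharp \circ f^\sharp$ and $\id_{(\Lambda V_A, d)}$ are two lifts of $m_A$ along itself, and uniqueness of such lifts up to homotopy yields $g^\sharp \circ f^\sharp \simeq \id_{(\Lambda V_A, d)}$.

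Finally, I would apply the (contravariant) homotopy Lie algebra functor $L$. Setting $\tilde{f} = L f^\sharp \de L_X \to L_{(A, 0)}$ and $\tilde{g} = L g^\sharp \de L_{(A, 0)} \to L_X$, homotopy-invariance and contravariance of $L$ give
\[
\tilde{f} \circ \tilde{g} = L(f^\sharp) \circ L(g^\sharp) = L(g^\sharp \circ f^\sharp) = L(\id) = \id_{L_{(A, 0)}},
\]
so $\tilde{f} \circ \tilde{g}$ is (in particular) an isomorphism, and $\tilde{g}$ has a left inverse, hence is injective.

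There is no serious obstacle here: the proof is assembled entirely from standard properties of Sullivan minimal models---existence of lifts of morphisms through quasi-isomorphisms, uniqueness of those lifts up to homotopy, and the fact that homotopic morphisms of minimal models induce equal maps on homotopy Lie algebras. The only point to track carefully is the direction of the arrows and the contravariance convention, so that $\tilde{f}$ and $\tilde{g}$ go the way demanded by the statement.
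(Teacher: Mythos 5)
Your proposal is correct and follows essentially the same route as the paper: lift $f$ and $g$ to the minimal Sullivan models of $(A,0)$ and of $X$ (using formality to take the model over $H^\star(X,\Q)$), observe that the composite lift is compatible with $g\circ f=\id_A$, and apply the homotopy Lie algebra functor. The only cosmetic difference is that the paper exploits surjectivity of the minimal models to get strict lifts and then concludes that $\bar g\circ\bar f$ is a quasi-isomorphism between $1$-connected minimal Sullivan algebras, hence an isomorphism, whereas you work with lifts up to homotopy and uniqueness of lifts to get $g^\sharp\circ f^\sharp\simeq\id$; both yield the required isomorphism $\tilde f\circ\tilde g$.
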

\begin{proof}
Let $(\Lambda V, d) \stackrel{m}{\to} \left(A, 0\right)$ and $(\Lambda 
V', d') \stackrel{m'}{\to} \left(H^\star(X, \Q),0\right)$ be the minimal
Sullivan models of $(A,0)$ and $X$ respectively (the map $m'$ exists 
because $X$ is a formal space).  Since these maps are quasi-isomorphisms
, they are surjective. The lifting lemma shows that there exists maps 
$\bar{f}$ and $\bar{g}$ such that $m' \circ \bar{f} = f \circ m$ and $m 
\circ \bar{g} = g \circ m'$. \[
\xymatrix{(\Lambda V, d) \ar[r]^{\bar{f}} \ar[d]_m & (\Lambda V', d') 
\ar[r]^{\bar{g}} \ar[d]_{m'} & (\Lambda V, d) \ar[d]^m \\
(A,0) \ar[r]_f & (H^\star(X, Q), 0) \ar[r]_g & (A,0) }
\]

The maps $\bar{f}$ and $\bar{g}$ verify $m \circ (\bar{g} \circ \bar{f})
 = (g \circ f) \circ m = m$. Since $g \circ f$ is an isomorphism, 
$\bar{g} \circ \bar{f}$ is a quasi-isomorphism between 1-connected 
minimal Sullivan algebras. It implies that it is an isomorphism.

Applying the homotopy Lie algebra functor to $(\Lambda V, d) \stackrel{
\bar{f}}{\to} (\Lambda V', d') \stackrel{\bar{g}}{\to} (\Lambda V, d)$ 
gives us the maps $\tilde{f} = L\bar{f}$ and $\tilde{g} = L\bar{g}$. \[
L_{(A,0)} \stackrel{\tilde{f}}{\leftarrow} L_X \stackrel{\tilde{g}}{
\leftarrow} L_{(A,0)} 
\]
Since $L$ is a functor, $\tilde{f} \circ \tilde{g} = (L\bar{f}) \circ (L
\bar{g}) = L(\bar{g} \circ \bar{f})$ is an isomorphism.
\end{proof}

\end{document}